\numberwithin{equation}{section}
\newtheorem{thm}[subsection]{Theorem}
\newtheorem{prop}[subsection]{Proposition}
\newtheorem{lem}[subsection]{Lemma}
\newtheorem{cor}[subsection]{Corollary}
\theoremstyle{plain}
\newtheorem*{thm*}{Theorem}
\newtheorem*{prop*}{Proposition}
\newtheorem*{cor*}{Corollary}
\newtheorem*{conj*}{Conjecture}
\newtheorem*{question*}{Question}
\theoremstyle{definition}
\newtheorem{ex}[subsection]{Example}
\renewcommand{\emptyset}{\varnothing}
\renewcommand{\char}{\text{char}}
\renewcommand{\phi}{\varphi}
\newcommand{\Oe}{{\mathcal O}}
\newcommand{\Q}{{\mathbb Q}}
\newcommand{\Z}{{\mathbb Z}}
\newcommand{\VC}{{\mathrm {VC}}}
\newcommand{\KK}{{\widetilde K}}
\newcommand{\VV}{{\widetilde V}}
\newcommand{\LL}{{\widetilde L}}
\newcommand{\EE}{{\widetilde E}}
\newcommand{\FF}{{\widetilde F}}
\newcommand{\Hom}{{\mathrm{Hom}}}
\newcommand{\End}{{\mathrm{End}}}
\newcommand{\Aut}{{\mathrm{Aut}}}
\newcommand{\Gal}{{\mathrm{Gal}}}
\newcommand{\Tr}{\mathrm{Tr}}
\newcommand{\gp}{\mathfrak{p}}
\newcommand{\ga}{\mathfrak{a}}
\begin{document}
\addtolength{\parskip}{\medskipamount}
\thispagestyle{empty}
\title{The valuation criterion for normal basis generators}
\author{B. de Smit, M. Florence, L. Thomas
\thanks{This research was funded in part
by the European Commission under FP6 contract MRTN-CT-2006-035495. The third author was supported by a postdoctoral fellowship at the \' Ecole Polytechnique F\'ed\'erale de Lausanne in Switzerland.}}
\date{April 14, 2010}
\maketitle

\begin{quote}
{\bf Abstract: }
If $L/K$ is a finite Galois extension of local
fields, we say that the valuation criterion $VC(L/K)$ holds if there is an
integer $d$ such that every element $x \in L$ with valuation $d$ generates a
normal basis for $L/K$. Answering a question of Byott and Elder, we first prove
that $VC(L/K)$ holds if and only if the tamely ramified part of the extension
$L/K$ is trivial and every non-zero $K[G]$-submodule of $L$ contains a unit.
Moreover, the integer $d$ can take one value modulo $[L:K]$ only, namely
$-d_{L/K}-1$, where $d_{L/K}$ is the valuation of the different of $L/K$. 
When $K$ has positive characteristic, we thus recover a recent result of Elder
and Thomas, proving that $VC(L/K)$ is valid for all extensions $L/K$ in this
context. When $\char{\;K}=0$, we identify all abelian extensions $L/K$ for which
$VC(L/K)$ is true, using algebraic arguments. These extensions are determined
by the behaviour of their cyclic Kummer subextensions. 

\noindent
{\bf Keywords:} Galois module structure, normal basis, local fields, representation theory of finite groups.

\noindent
{\bf 2010 Mathematics Subject Classification:} 11S20, 20C05.
\end{quote}

\section{Introduction}
Let $K$ be a field.
Let $L$ be a finite Galois extension of $K$, with Galois group~$G$.
An element $x\in L$ is called a \emph{normal basis generator} of $L$ over $K$,
or simply a \emph{normal} element of $L$ over $K$, if the conjugates of $x$
under $G$ form a basis of $L$ as a vector space over~$K$.
The normal basis theorem states that such an element exists.

Assume that $K$ (resp. $L$) is a local field, i.e., that it is complete with
respect to a discrete valuation $v_K\colon\; K^*\twoheadrightarrow \Z$ (resp.
$v_L\colon\; L^*\twoheadrightarrow \Z$). We consider the following question,
suggested by Byott and Elder~\cite{ByottElder07}.

\begin{question*}
Is there an element $d\in \Z$ so that every
$x\in L$ with $v_L(x)=d$ is normal over $K$?
\end{question*}

Note that, if such an integer $d$ exists, then all integers that are congruent
to $d$ modulo the ramification index $e_{L/K}$ satisfy the same property.

For example, when $K=\Q_2$ and $L=\Q_2(\sqrt{-1})$, then all
elements of odd valuation in $L$ are normal.
However, for $L=\Q_2(\sqrt{2})$,
the powers of $\sqrt 2$ give elements of $L$ of all possible valuations which
are not normal, so the answer to the question is no.

If $x\in L$ is normal over $K$, then the Galois conjugates are linearly
independent, so their sum, the trace $\Tr_{L/K}(x)$, is non-zero. It turns out
that it is quite easy to give a valuation criterion, formulated in the next
proposition, for this weaker property of having a non-zero trace.
We denote the valuation of the different of $L/K$ by $d_{L/K}$.

\begin{prop}\label{prop1}
Let $L/K$ be a finite separable extension of local fields, and let $d\in \Z$.
Every element of $L$ of valuation $d$ has non-zero trace
over $K$ if and only if the following two properties hold.
\begin{enumerate}
\item[$(1)$] $L/K$ is totally ramified.
\item[$(2)$] $d\equiv  - d_{L/K} - 1 \bmod [L:K]$.
\end{enumerate}
\end{prop}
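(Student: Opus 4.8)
The plan is to reduce everything to understanding the image of the powers of the maximal ideal under the trace, and then to read off both conditions from a single ``jump'' computation. Write $e=e_{L/K}$ for the ramification index, $f$ for the residue degree, $D=d_{L/K}$, and let $\gp_L,\gp_K$ be the maximal ideals, $\pi_K$ a uniformizer of $K$. First I would record the standard fact that $\Tr_{L/K}(\gp_L^m)=\gp_K^{c(m)}$ for an integer $c(m)$: the image is a nonzero (the trace form is nondegenerate by separability) $\Oe_K$-submodule of $K$ bounded below, hence such a power. The function $c$ is nondecreasing, and $\gp_L^{m+e}=\pi_K\gp_L^m$ gives $c(m+e)=c(m)+1$; thus over each length-$e$ window $c$ increases by exactly $1$, in a single unit step. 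The definition of the different as the sharp exponent of the codifferent $\gp_L^{-D}$ says precisely that $\Tr(\gp_L^{-D})\subseteq\Oe_K$ but $\Tr(\gp_L^{-D-1})\not\subseteq\Oe_K$, i.e.\ $c(-D)\ge 0$ and $c(-D-1)\le -1$; combined with the one-step bound this forces $c(-D)=0$ and $c(-D-1)=-1$. Hence $c$ jumps (by $1$) exactly at the integers $m\equiv -D \pmod e$.

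Next I would fix $d$ and test the single layer $m=d$, the point being whether a jump occurs at $d+1$, i.e.\ whether $d\equiv -D-1\pmod e$. Suppose first that \emph{no} jump occurs at $d+1$, i.e.\ $d\not\equiv -D-1$. Then $\Tr(\gp_L^{d+1})=\gp_K^{c(d)}=\Tr(\gp_L^{d})$. Picking any $x_0$ with $v_L(x_0)=d$, its trace lies in $\gp_K^{c(d)}=\Tr(\gp_L^{d+1})$, so I can choose $y\in\gp_L^{d+1}$ with $\Tr y=\Tr x_0$; then $x_0-y$ has valuation exactly $d$ and zero trace, so $d$ fails. The same trick dispatches the case $f>1$ even when the jump \emph{is} present: here the induced layer map $\overline{\Tr}\colon \gp_L^{d}/\gp_L^{d+1}\to \gp_K^{c(d)}/\gp_K^{c(d)+1}$ is well defined (the jump gives $\Tr(\gp_L^{d+1})=\gp_K^{c(d)+1}$) and surjective (some $z\in\gp_L^d$ has $v_K(\Tr z)=c(d)$, and it must satisfy $v_L(z)=d$), but its source has $\kappa_K$-dimension $f>1$ while its target is one-dimensional, so it has a nonzero kernel; lifting a kernel vector to $x$ with $v_L(x)=d$ gives $\Tr x\in\gp_K^{c(d)+1}=\Tr(\gp_L^{d+1})$, and subtracting a suitable $y\in\gp_L^{d+1}$ again produces an element of valuation $d$ with vanishing trace. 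Thus $d$ fails whenever $d\not\equiv -D-1\pmod e$ or $f>1$.

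It remains to show that when the jump is present \emph{and} $f=1$, every element of valuation $d$ has nonzero trace. In that case $\kappa_L=\kappa_K$, and the layer map $\overline{\Tr}\colon \gp_L^{d}/\gp_L^{d+1}\to \gp_K^{c(d)}/\gp_K^{c(d)+1}$ is a surjection between one-dimensional $\kappa_K$-spaces, hence an isomorphism; so any $x$ with $v_L(x)=d$ has nonzero image, i.e.\ $v_K(\Tr x)=c(d)<\infty$ and $\Tr x\neq 0$. Assembling the cases, every element of valuation $d$ has nonzero trace if and only if $f=1$ and $d\equiv -D-1\pmod e$; and $f=1$ is exactly condition $(1)$ that $L/K$ is totally ramified, after which $e=[L:K]$ turns $d\equiv -D-1\pmod e$ into condition $(2)$. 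I expect the main obstacle to be the bookkeeping around the jump: pinning it at $-D\pmod e$ directly from the definition of the different, and recognizing that the \emph{same} ``solve the trace equation one layer up'' maneuver simultaneously kills the no-jump case and the positive-residue-degree case, so that only the genuinely nondegenerate situation $f=1$ survives.
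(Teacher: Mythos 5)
Your proof is correct and follows essentially the same route as the paper: both study the trace-induced map on the layer $\gp_L^{d}/\gp_L^{d+1}$, compare the $k$-dimension of the source ($f$) with that of the target ($0$ or $1$ according to the congruence class of $d$), and in every degenerate case produce a zero-trace element of valuation exactly $d$ by subtracting an element of $\gp_L^{d+1}$ with the same trace. The only cosmetic difference is that you re-derive the ideal-trace formula $\Tr_{L/K}(\gp_L^{m})=\gp_K^{c(m)}$ and the location of its jumps from the definition of the different, where the paper simply records this as its displayed formula (2.1).
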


The proof is given in Section 2.
In particular, Proposition \ref{prop1} implies
that the answer to the question is
positive if and only if the following statement holds:
\[
\VC(L/K): \quad
\textrm{ all }
x\in L^*
\textrm{ with }
v_L(x) \equiv -d_{L/K} -1  \bmod  \, e_{L/K}
\textrm{ are normal over }
K.
\]
We will call this statement the \emph{valuation criterion} for normal basis
generators of $L$ over~$K$.

\vskip6mm

\begin{prop} \label{VCP}
Let $L/K$ be a finite Galois extension of local fields.
The valuation criterion $\VC(L/K)$ holds if and only if
the following two conditions hold.
\begin{enumerate}
\item[$(1)$]
$L/K$ is totally ramified and $[L:K]$ is a power of the residue characteristic $p$.
\item[$(2)$]
Every non-zero $K[G]$-submodule of $L$ contains an element of valuation~$0$.
\end{enumerate}
\end{prop}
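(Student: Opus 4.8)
The plan is to recast normality in module-theoretic terms and then exploit a trace-form duality on $L$ that interchanges the unit class with the critical class $\delta:=-d_{L/K}-1\in\Z/e_{L/K}$. By the normal basis theorem $L$ is free of rank one over $K[G]$, so $x\in L$ is normal if and only if it generates $L$ as a $K[G]$-module, i.e.\ if and only if it lies in no proper $K[G]$-submodule. For a nonzero $K$-subspace $V\subseteq L$ put $S(V)=\{\,v_L(x)\bmod e_{L/K}:x\in V\setminus\{0\}\,\}$; this is well defined because multiplication by $K^{*}$ shifts $v_L$ by multiples of $e_{L/K}$. Unwinding the definitions, $\VC(L/K)$ holds precisely when no proper $K[G]$-submodule $V$ satisfies $\delta\in S(V)$. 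First I would record that $\VC(L/K)$ forces total ramification: a normal element has nonzero trace, so applying Proposition~\ref{prop1} to the class $\delta$ yields the total ramification in condition~(1) and simultaneously identifies $\delta$ as the unique critical class of that proposition.

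Assume now that $L/K$ is totally ramified, so $e_{L/K}=[L:K]$. The heart of the argument is a self-duality. The form $T(x,y)=\Tr_{L/K}(xy)$ is nondegenerate and $G$-invariant, so $V\mapsto V^{\perp}$ is an inclusion-reversing involution on $K[G]$-submodules with $\dim_K V+\dim_K V^{\perp}=e_{L/K}$. If $x\in V$ and $y\in V^{\perp}$ had valuations $d$ and $-d-d_{L/K}-1$, then $xy$ would have valuation $-d_{L/K}-1$, hence nonzero trace by Proposition~\ref{prop1}, contradicting $y\perp x$; thus $d\in S(V)\Rightarrow(-d-d_{L/K}-1)\notin S(V^{\perp})$. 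To turn this one-sided statement into an equivalence I would prove the counting lemma $\#S(V)=\dim_K V$ for every nonzero $K$-subspace $V$ (the inequality $\#S(V)\le\dim_K V$ is linear independence of elements of distinct valuation classes; the reverse is a successive-approximation argument lifting residues along $\kappa_L=\kappa_K$ and using completeness of $K$). Then $\#S(V)+\#S(V^{\perp})=e_{L/K}$ forces the disjoint sets $\{-d-d_{L/K}-1:d\in S(V)\}$ and $S(V^{\perp})$ to partition $\Z/e_{L/K}$, yielding
\[ d\in S(V)\iff (-d-d_{L/K}-1)\notin S(V^{\perp}). \]
Taking $d=\delta$ gives $\delta\notin S(V)\iff 0\in S(V^{\perp})$; letting $V$ range over all proper nonzero submodules (so that $V^{\perp}$ does too) converts ``$\delta\notin S(V)$ for all proper $V$'' into ``$0\in S(W)$ for all proper $W$'', which is condition~(2), the case $W=L$ being automatic. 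Hence, under total ramification, $\VC(L/K)\iff(2)$. I expect this counting lemma, and the way it upgrades mere orthogonality to a bijection of valuation classes, to be the crux.

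Finally I would extract the $p$-power statement. Let $G_1$ be the wild inertia subgroup and $F=L^{G_1}$, so $F/K$ is tamely totally ramified of degree $m$, the prime-to-$p$ part of $e_{L/K}$, while $[L:F]=p^{a}=e_{L/K}/m$; note $F$ is a $K[G]$-submodule. The map $\Tr_{L/F}=\sum_{h\in G_1}h\colon L\to F$ is surjective and $K[G]$-linear, hence carries normal generators of $L/K$ to normal generators of $F/K$. Using the tower formula $d_{L/K}=d_{L/F}+p^{a}d_{F/K}$ one checks $\delta\equiv -d_{L/F}-1\bmod p^{a}$, so by Proposition~\ref{prop1} for $L/F$ the trace is nonzero on the $\delta$-layer; tracking the different shows $\Tr_{L/F}$ maps the layer $v_L=\delta$ onto the layer $v_F=-d_{F/K}-1$, and a successive-approximation argument realizes every element of $F$ in the critical class $-d_{F/K}-1\bmod m$ as $\Tr_{L/F}(x)$ with $v_L(x)=\delta$. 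This shows $\VC(L/K)\Rightarrow\VC(F/K)$. But $F/K$ is tame, so its critical class is $-d_{F/K}-1\equiv 0$, and $1\in F$ lies in it yet fails to be normal whenever $m>1$; thus $\VC(F/K)$ fails for $m>1$, forcing $m=1$ and completing condition~(1). Assembling the pieces gives both implications: $\VC(L/K)$ yields (1) via Proposition~\ref{prop1} and the tame reduction and (2) via the duality, while conversely (1) supplies total ramification and the duality then promotes (2) to $\VC(L/K)$.
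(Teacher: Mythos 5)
Your proof is correct, and its core is the same as the paper's: your counting lemma $\#S(V)=\dim_K V$ is Lemma~\ref{NiceLemma} (the paper proves the spanning half with Nakayama's lemma, which is the same successive-approximation idea you invoke), your partition argument upgrading the one-sided orthogonality statement to $\delta\notin S(V)\iff 0\in S(V^{\perp})$ is Lemma~\ref{duality}, and the resulting equivalence of $\VC(L/K)$ with condition (2) under total ramification is the paper's argument essentially verbatim. The one step where you genuinely depart from the paper is the extraction of the $p$-power condition. The paper stays inside the module picture: every nonzero $K[\Gal(L'/K)]$-submodule of the maximal tame subextension $L'$ is in particular a nonzero $K[G]$-submodule of $L$, so condition (2) passes to $L'/K$; the already-proved equivalence then gives $\VC(L'/K)$, and since $d_{L'/K}=e_{L'/K}-1$ makes $0$ the critical class while $1\in K$ fails to be normal unless $L'=K$, this forces $L'=K$. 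You instead prove $\VC(L/K)\Rightarrow\VC(F/K)$ directly (with $F=L^{G_1}$ the same tame subextension), by observing that $\Tr_{L/F}$ is a surjective $K[G]$-linear map, hence carries normal generators to normal generators, and by checking, via the tower formula for the different and the ideal-trace formula \eqref{different}, that it maps the critical layer of $L$ onto the critical layer of $F$; you then close with the same observation about $1$. Your route costs more bookkeeping (the tower formula, the valuation tracking, surjectivity on layers), but it yields a self-contained proof of the relevant case of Corollary~\ref{subcor} that bypasses the duality machinery, whereas the paper obtains the same fact in two lines from what it has already established. Both arguments are valid and complete.
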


The proof will be given in
Section 3. It is based on a duality result for the set of
valuations of elements of a sub-$K$-vector space of~$L$.

Note that, if the residue field of $K$ has characteristic zero, then Proposition \ref{VCP} can be restated as:  $\VC(L/K)$ holds if and only if $L=K$.  
%From now on, we therefore assume that the residue field of $K$ has positive characteristic.

Note also that, in condition $(2)$, we may restrict to the minimal
non-zero $K[G]$-submodules of $L$. The condition in (1) that $[L:K]$ is a power of $p$ can be omitted --- we will see in the proof that it is implied by condition (2). The condition that $L/K$ should be totally ramified can not be omitted.

If $K$ has characteristic $p>0$ and condition $(1)$ in Proposition \ref{VCP} holds,
then condition $(2)$ also holds
because every nonzero $G$-stable $K$-vector subspace of $L$
then contains $K$; see \cite[Ch.~IX, Th.~2]{SerCL}.
In the equal characteristic case Proposition \ref{VCP} therefore
tells us that condition $(1)$ implies $\VC(L/K)$, which
was shown already by Thomas~\cite{Lara08} and by Elder~\cite{Eld09}.

From now on, we assume that $K$ is of mixed characteristic $(0,p)$.

In general, condition $(1)$ of Proposition \ref{VCP} is not
sufficient for $\VC(L/K)$ to hold.  For example, consider
the extension $\Q_2(\sqrt 2))/\Q_2$.
More generally, elements in extensions of $K$ that are strictly
contained in $L$ are never normal elements
of $L$, so one sees that the condition $p\nmid -d_{L/K}-1$ is necessary
for $\VC(L/K)$ to hold.  For cyclic extensions of degree $p$ this
condition is also sufficient; cf. \cite{ByottElder07}.
However, we will see in Example~\ref{cp2} that this condition is not sufficient
for cyclic extensions of degree~$p^2$.

By condition $(2)$ in Proposition \ref{VCP}, we can easily identify the Kummer
extensions for which the valuation criterion holds.  Recall that $L/K$ is a
\emph{Kummer extension} if there is a number $m$ so that $K$ contains a
primitive root of unity of order $m$, and $\Gal(L/K)$ is abelian of
exponent~$m$.  Then the characteristic of $K$ does not divide $m$, and by
Kummer theory we have $L=K(\root m \of W)$ for $W={L^*}^m\cap K^*$.  If, in
addition, we have $v_K(W)\subset m\Z$, then $L$ is obtained by adjoining $m$-th
roots of units of the valuation ring of $K$, and we say that $L$ is a
\emph{unit root Kummer extension} of~$K$.  For example, $\Q_2(\sqrt{-1})$ is a
unit root Kummer extension of $\Q_2$, whereas $\Q_2(\sqrt{2})$ is not.

\begin{thm}\label{thm1}
Let $L/K$ be a totally ramified Kummer extension of local fields
whose degree is a power of the residue characteristic~$p$.
Then $\VC(L/K)$ holds if and only if $L$ is a unit root Kummer extension
of~$K$.
\end{thm}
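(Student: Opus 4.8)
The plan is to use Proposition \ref{VCP}: since we are given that $L/K$ is totally ramified of $p$-power degree, condition (1) holds automatically, so everything reduces to checking condition (2), namely that every nonzero $K[G]$-submodule of $L$ contains an element of valuation $0$ (equivalently, a unit). Moreover, as the excerpt notes, it suffices to verify this for the \emph{minimal} nonzero $K[G]$-submodules of $L$. The strategy is therefore to understand the simple $K[G]$-submodules of $L$ explicitly, using the Kummer-theoretic description of the extension, and to relate the presence of a unit in each such submodule to the unit-root condition on $W = {L^*}^m \cap K^*$.

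First I would set up the representation theory. Since $G = \Gal(L/K)$ is abelian of exponent $m$ and $K$ contains a primitive $m$-th root of unity $\zeta$, the group algebra $K[G]$ splits completely: $L$ decomposes as a direct sum of one-dimensional $K[G]$-eigenspaces indexed by the characters $\chi \in \widehat{G}$. Concretely, by Kummer theory each character $\chi$ corresponds to an element $w_\chi \in W$ (well-defined modulo ${K^*}^m$), and the $\chi$-eigenspace is $K \cdot \root m \of{w_\chi}$, spanned by a chosen $m$-th root $a_\chi = \root m \of{w_\chi} \in L$. The minimal $K[G]$-submodules are exactly these lines $K \cdot a_\chi$. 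So condition (2) becomes: for every character $\chi$, the line $K \cdot a_\chi$ contains a unit, i.e. $v_L(a_\chi) \equiv 0 \bmod e_{L/K}$, which since the extension is totally ramified means $v_L(a_\chi) \in [L:K]\Z = e_{L/K}\Z$.

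Next I would translate this valuation condition into the unit-root condition. Because $L/K$ is totally ramified, $v_L$ restricted to $K^*$ takes values in $e_{L/K}\Z$, and for a generator $w_\chi \in K^*$ we have $v_L(a_\chi) = \tfrac{1}{m} v_L(w_\chi) \cdot (\text{order of } \chi)$ after the appropriate bookkeeping — more carefully, $m \cdot v_L(a_\chi) = v_L(w_\chi)$, so the condition ``$v_L(a_\chi) \in e_{L/K}\Z$ for all $\chi$'' becomes a divisibility condition on the valuations $v_L(w_\chi)$, i.e. on $v_K(W)$. The unit-root hypothesis is precisely $v_K(W) \subset m\Z$, and I would check that this is equivalent to every generator $w$ of $W$ (modulo ${K^*}^m$) being adjustable, by multiplying by an $m$-th power from $K^*$, to a unit — which in turn forces each $a_\chi$ to be (a unit times) an $m$-th root of a unit, hence of valuation $0$. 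Both implications run through this dictionary: unit-root $\Rightarrow$ every eigenline has a unit $\Rightarrow$ VC by Proposition \ref{VCP}; and conversely VC $\Rightarrow$ every eigenline has a unit $\Rightarrow$ $v_K(W) \subset m\Z$.

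**The main obstacle** I anticipate is the valuation bookkeeping in the non-cyclic case, where $G$ has several independent generators and $W$ is generated by several elements modulo ${K^*}^m$. One must be careful that the unit-root condition $v_K(W)\subset m\Z$ is genuinely equivalent to \emph{every} eigenline carrying a unit, and not merely to some convenient choice of Kummer generators doing so: the subtlety is that a single $w \in K^*$ with $v_K(w)\notin m\Z$ produces an eigenline $K\cdot \root m \of w$ with no unit, and one must confirm that such a $w$ cannot be hidden or cancelled when $W$ is generated by several units together with one non-unit. I expect the clean way to handle this is to note that $v_K\colon W/({K^*}^m \cap W) \to \Z/m\Z$ is a well-defined homomorphism whose vanishing is exactly the unit-root condition, and whose nonvanishing produces an explicit eigenline violating condition (2); the rest is then a direct application of Proposition \ref{VCP} together with the reduction to minimal submodules.
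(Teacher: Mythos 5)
Your proposal is correct and follows essentially the same route as the paper: reduce via Proposition \ref{VCP} to checking that every minimal non-zero $K[G]$-submodule contains a unit, use the complete splitting of $K[G]$ together with Kummer theory to identify these minimal submodules with the lines $K\cdot\root m \of w$ for $w$ ranging over $W$ modulo ${K^*}^m$, and then translate ``contains a unit'' into $m \mid v_K(w)$ by the valuation computation $v_L\bigl(\root m \of w\bigr)=n\,v_K(w)/m$. The well-definedness point you raise about $v_K$ inducing a homomorphism $W/{K^*}^m\to\Z/m\Z$ is exactly what makes the paper's choice of coset representatives harmless, so your extra care there is sound but not a departure from the paper's argument.
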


In Section~\ref{ProofThm2} we give the proof, and we show 
how the general abelian case can be reduced to the Kummer case. 
Precisely, we will show the following theorem.

\begin{thm}\label{thm2}
Let $L/K$ be a totally ramified abelian extension of local fields
whose degree is a power of the residue characteristic~$p>0$.
Let $m$ be the exponent of $\Gal(L/K)$, and let $r\mid m$ be the
number of $m$-th roots of unity inside~$K$.
If $p=2$ and $8\mid m$, assume that $r\ne 2$.
Then $\VC(L/K)$ holds
if and only if every cyclic subextension $F/E$ of $L/K$ of degree $r$ is a unit root Kummer extension.
\end{thm}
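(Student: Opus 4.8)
The plan is to run $\VC(L/K)$ through Proposition~\ref{VCP} and then convert its module-theoretic condition $(2)$ into a family of statements about cyclic Kummer subextensions, each of which is governed by Theorem~\ref{thm1}. Since $L/K$ is totally ramified abelian of $p$-power degree, Proposition~\ref{VCP} reduces $\VC(L/K)$ to the assertion that every minimal $K[G]$-submodule of $L$ contains an element of valuation~$0$. Because $\char K=0$, the algebra $K[G]$ is semisimple, and as $G$ is abelian the normal basis theorem yields $L\cong K[G]$ as $K[G]$-modules. The minimal submodules are therefore indexed by the $\Gal(\Kbar/K)$-orbits $[\psi]$ of characters $\psi\colon G\to\Kbar^*$; writing $n=\mathrm{ord}(\psi)$ and $F_\psi=L^{\ker\psi}$, the corresponding summand $M_{[\psi]}$ lies in $F_\psi$, which is cyclic over $K$ of degree $n$ with $\psi$ faithful on $\Gal(F_\psi/K)$, and $M_{[\psi]}$ is its isotypic component for $[\psi]$.

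The heart of the matter is a valuation criterion for a single $M_{[\psi]}$. Let $E_\psi\subseteq F_\psi$ be the subfield with $[F_\psi:E_\psi]=\min(n,r)$; since $\mu_r\subseteq K\subseteq E_\psi$, the extension $F_\psi/E_\psi$ is cyclic Kummer. I would prove the key sub-lemma: $M_{[\psi]}$ contains an element of valuation~$0$ if and only if $F_\psi/E_\psi$ is a unit root Kummer extension. When $n\le r$ this is elementary, for then $\psi$ takes values in $K$, so $M_{[\psi]}=K\cdot\sqrt[n]{c}$ is an eigenline with $c\in K^*$, and $v_L(\sqrt[n]{c})\equiv 0\pmod{e_{L/K}}$ holds exactly when $n\mid v_K(c)$, i.e. when $F_\psi/K$ is unit root. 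The difficult case is $n>r$: here the character field $K(\mu_n)$ is a nontrivial ramified cyclotomic extension of $K$, so $M_{[\psi]}$ is a full $K(\mu_n)$-line, its nonzero elements are not eigenvectors, and one must descend to $E_\psi$—which does contain $\mu_r$—to expose a Kummer generator before the valuations of the whole Galois orbit can be controlled. I expect this descent, together with the valuation bookkeeping for the orbit, to be the main obstacle; it is also exactly the place where the structure of $\Gal(K(\mu_n)/K)$ intervenes, and for $p=2$ with $8\mid m$ this group ceases to be cyclic, which is what forces the hypothesis $r\ne 2$.

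Granting the sub-lemma, I would assemble the two implications. For $\VC(L/K)\Rightarrow$ ``every degree-$r$ cyclic $F/E$ is unit root'': given such an $F/E$, write $F=L^N$, $E=L^{N'}$ with $N\subseteq N'$ and choose a faithful character of $N'/N$; inflating it to $N'$ and extending to a character $\widetilde\psi$ of $G$ gives $F=E\cdot F_{\widetilde\psi}$ with $E\supseteq E_{\widetilde\psi}$, so that $F/E$ is a base change of the top layer $F_{\widetilde\psi}/E_{\widetilde\psi}$. The sub-lemma makes $F_{\widetilde\psi}/E_{\widetilde\psi}$ unit root, and since $v_E=e_{E/E_{\widetilde\psi}}\,v_{E_{\widetilde\psi}}$ preserves divisibility by $r$, the extension $F/E$ is unit root as well. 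For the converse, I apply the sub-lemma to each $\psi$. When $n\ge r$ the extension $F_\psi/E_\psi$ is itself one of the degree-$r$ cyclic subextensions, so it is unit root by hypothesis and $M_{[\psi]}$ meets valuation~$0$. The remaining, more delicate point is the case $n<r$, where the sub-lemma asks only that $F_\psi/K$ (of degree $n<r$) be unit root: this does not come from a single member of the family, but I would deduce it by combining several degree-$r$ conditions over suitable intermediate fields $E$, using that unit root Kummer extensions stay unit root under base change and that a non-unit-root subextension of degree $n$ necessarily spoils a degree-$r$ layer over an appropriate~$E$. Specializing to $r=m$ recovers that $L/K$ is itself Kummer and the criterion collapses to Theorem~\ref{thm1}.
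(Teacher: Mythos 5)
Your framework coincides with the paper's own first reduction: Proposition~\ref{VCP} turns $\VC(L/K)$ into the statement that every minimal $K[G]$-submodule of $L$ contains a unit, and your observation that each $M_{[\psi]}$ lies inside the cyclic subextension $F_\psi$ is exactly the paper's Lemma~\ref{cyclic}; your ``elementary'' case $n\le r$ is the eigenline computation in the proof of Theorem~\ref{thm1}. The problem is that beyond this setup everything is announced rather than proved. Your key sub-lemma in the case $n>r$ --- which for $r=1$ asserts \emph{unconditionally} that $M_{[\psi]}$ contains a unit --- is the mathematical core of the theorem, and for it you offer only the remark that the descent to $E_\psi$ ``is the main obstacle.'' In the paper this core occupies three lemmas and an induction: for $r=1$, Lemma~\ref{hard1} passes to $\KK=K(\mu_p)$, $\LL=L(\mu_p)$ and uses an automorphism $\sigma\in\Gal(\LL/K)$ acting on roots of unity by $x\mapsto x^c$ with $c\not\equiv 1\bmod p$ to force $s(E_\chi)=c\,s(E_\chi)$, hence $s(E_\chi)=\{0\}$ for every eigenspace, and Lemma~\ref{tamebasechange} descends $\VC$ back along the tame extension $\KK/K$; for $p\mid r$, Lemma~\ref{hard2} factors $X^n-1=(X^{n/p}-1)\prod_\zeta(X^{n/r}-\zeta)$, proves each $X^{n/r}-\zeta$ irreducible over $K$ (this is precisely where the hypothesis $r\ne 2$ when $p=2$, $8\mid n$ is consumed), concludes that every minimal $K[G]$-submodule lies in $L_p$ or is an eigenline for $\Gal(L/L_r)$, and one then inducts on $n/r$. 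None of these arguments can be reconstructed from your outline, and they are not optional: Example~\ref{z32} shows your sub-lemma is \emph{false} once the excluded case is allowed, since there $F_\psi/E_\psi=L_8/L_4$ is a unit root Kummer extension ($\zeta_{32}^2=\zeta_{16}$ is a unit) while the corresponding minimal module $V$ has $s(V)=\{2,6\}\not\ni 0$. So any proof of the sub-lemma must contain the irreducibility/group-structure argument; correctly pointing at the place where $\Gal(K(\mu_n)/K)$ ``intervenes'' is not that argument.

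There is a second gap in your assembly step, the case $n<r$: you must show that a cyclic Kummer subextension $F_\psi/K$ of degree $n<r$ is unit root, although the hypothesis only concerns degree-$r$ layers and $F_\psi$ need not embed in any cyclic degree-$r$ subextension (take $\Gal(L/K)\cong\Z/p^2\times\Z/p$ with $\mu_{p^2}\subset K$ and $\psi$ of order $p$). You say you ``would deduce it by combining several degree-$r$ conditions over suitable intermediate fields,'' which is indeed the kind of argument required --- for instance writing the order-$p$ Kummer class of $b$ as the quotient of the order-$p^2$ classes of $ab^p$ and $a$ and adding the two congruences on valuations --- but no such argument appears, and it is genuinely needed (the paper itself is very terse at the corresponding point of its reduction to the cyclic case). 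In sum: you have the right skeleton and you have correctly located both difficulties, but you resolve neither; the first of them is where the theorem actually lives, so the proposal falls well short of a proof.
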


If $r=1$ in Theorem \ref{thm2}, then the condition in the theorem is trivially
satisfied, so that the valuation criterion holds.  In particular, if $K$ does
not contain a primitive $p\/$th root of unity, then the valuation criterion
holds for every abelian $p$-extension $L$ of $K$.

When $p=2$, the additional hypothesis is
due to the fact that $(\Z/2^k\Z)^*$ is not cyclic
for $k\ge 3$.
If $K=\Q_2(\sqrt{-2})$ and $L=\Q_2(\mu_{32})$, then we have $m=8$ and $p=r=2$.
Theorem~\ref{thm2} implies that $\VC(F/E)$ holds for all extensions $E\subset
F$ with $K\subset E \subset F\subset L$ of degree at most 4.
We will see in Example \ref{z32} that
$\VC(L/K)$ does not hold. Thus,
we cannot omit the condition when $p=2$ in Theorem~\ref{thm2}.

\section{The valuation criterion for having non-zero trace}

The purpose of this section is to prove Proposition~\ref{prop1}.

As before, $K$ denotes a local field and $v_K\colon\; K^*\twoheadrightarrow \Z$
is the normalized valuation.  Inside $K$ we consider the valuation ring
$\Oe_K=\{x\in K^*\colon\; v_K(x)\ge 0\} \cup \{0\}$, its maximal ideal
$\gp_K$ and its unit group $\Oe_K^*=\{x\in K^*\colon\; v_K(x)=0\}$.
The valuation $v_L(\ga)$ of a fractional ideal $\ga$ is the valuation
of any of its generators, so $v_K(\gp_K^i)=i$ for all $i\in \Z$.

Suppose now that $L$ is a finite separable field extension of~$K$.  Then $L$
has the structure of a local field as well. We denote by $\Tr_{L/K}$ the trace
map from $L$ to~$K$. Two integers are naturally attached to the extension
$L/K$. The first one is its ramification index $e_{L/K}$, given by the equality
$v_L(K^*)=e_{L/K}\Z$.
The second one is $d_{L/K}$, the valuation of the
different of $L$ over $K$, which is characterized by the property
that
$$ i \ge -d_{L/K} \iff \Tr_{L/K}(\gp_L^i)\subset \Oe_K $$
for all $i\in\Z$; cf.  \cite[Ch.~III]{SerCL}.
Using this it is easy to identify the traces of ideals:
for every $i\in \Z$ we have
\begin{equation}\label{different}
\Tr_{L/K}(\gp_L^{-d_{L/K}+i})=\gp_K^{\left\lfloor \frac{i}{e_{L/K}}\right\rfloor},
\end{equation}
where $\lfloor x \rfloor$ denotes the largest integer $n$ with $n\le x$.

\begin{proof}[Proof of Proposition \ref{prop1}]
For any $d\in \Z$ let us consider the map
$$
\phi\colon\; \gp_L^{d} /\gp_L^{d+1} \longrightarrow
\Tr_{L/K}(\gp_L^d)/ \Tr_{L/K}(\gp_L^{d+1})
%\qquad (x \bmod \gp_L^{d+1} ) \mapsto (\Tr_{L/K}(x) \bmod \Tr_{L/K}(\gp_L^{d+1}))
.
$$
induced by the trace map.
Denoting the residue field of $K$ by $k$, and the degree of the residue field
extension of $L/K$ by $f=[L:K]/e_{L/K}$, we see that the domain of
$\phi$ is an $f$-dimensional vector space over $k$. By \eqref{different} above,
the codomain of $f$ is a vector space over $k$ which
is of dimension $1$ if $d \equiv -d_{L/K}-1 \bmod [L:K]$ and of dimension $0$
if $d \not\equiv -d_{L/K}-1 \bmod [L:K]$.
Since $\phi$ is a $k$-linear surjective map it follows that $\phi$ is an
isomorphism if and only if both conditions (1) and (2) in the proposition are satisfied.

We now distinguish two cases. If $\phi$ is an isomorphism, then for any
element $x\in L^*$ with $v_L(x)=d$ we have $\Tr_{L/K}(x)\not\in
\Tr_{L/K}(\gp_L^{d+1})$, which implies that $\Tr_{L/K}(x)\ne 0$.

If, on the other hand, $\phi$ is not an isomorphism, then we can choose $x\in
\gp_L^{d}$ so that $(x \bmod \gp_L^{d+1} )$ is a non-zero element of the kernel
of $\phi$.
We then have $\Tr_{L/K}(x)\in \Tr_{L/K}(\gp_L^{d+1})$, so that
$\Tr_{L/K}(x)=\Tr_{L/K}(y)$ for some $y\in \gp_L^{d+1}$. But this implies that
$x-y$ is an element of $L$ of valuation $d$ and trace $0$.
This completes the proof of Proposition \ref{prop1}.
\end{proof}

\section{The set of valuations of elements in a linear subspace}

In this section we prove Propostion \ref{VCP}. The key tools 
we develop for this, and for applications in the next section,
are basic properties of the set of valuations of elements
in subspaces of a field extension.

Let $L/K$ be a finite separable
totally ramified extension of local fields of degree $n$. 
Let $v\colon\;L^* \rightarrow \Z/n\Z$ be the map given by $x\mapsto v_L(x) \bmod n.$
For any sub-$K$-vector space $V$ of $L$ we define the set $s(V)$ by
$$
s(V)=\{v(x)\colon\; x \in V,\ x\ne 0\} \subset \Z/n\Z.
$$

Since $L$ is totally ramified over $K$, Proposition~\ref{prop1} implies
that exactly one residue class modulo $e_{L/K}$ does not occur as the valuation
of an element of the ``trace zero'' hyperplane. This is a general fact that
holds for all sub-$K$-vector spaces of~$L$.

\begin{lem} \label{NiceLemma}
For every sub-$K$-vector space $V$ of~$L$, we have $\# s(V)=\dim_K(V)$.
\end{lem}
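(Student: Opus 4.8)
The plan is to compute $\dim_K V$ by filtering $V$ according to valuation and matching the successive quotients against the residue classes that occur in $s(V)$. As a warm-up observation that already yields one inequality, note that nonzero elements $x_1,\dots,x_m\in V$ whose valuations $v(x_i)\in\Z/n\Z$ are pairwise distinct must be $K$-linearly independent: in a relation $\sum_i a_ix_i=0$ with $a_i\in K$ not all zero, total ramification gives $v_L(a_i)\in n\Z$, so $v_L(a_ix_i)\equiv v(x_i)\bmod n$, and the nonzero terms then have pairwise distinct integer valuations; the unique term of least valuation cannot be cancelled, a contradiction. Choosing one element of $V$ for each class in $s(V)$ thus gives $\#s(V)\le\dim_K V$. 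The real content is the reverse inequality, and the main obstacle is exactly that $V$ is stable under multiplication by $K^*$: the set of valuations of its nonzero elements is unbounded below and above, so naive induction or Gaussian elimination on valuations has no evident termination.

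To circumvent this I would replace $V$ by a genuine $\Oe_K$-lattice. For $i\in\Z$ put $V_{\ge i}=V\cap\gp_L^{\,i}$, an $\Oe_K$-submodule of $L$, and let $g_i=V_{\ge i}/V_{\ge i+1}$. Since $\pi_K V_{\ge i}\subseteq V_{\ge i+n}\subseteq V_{\ge i+1}$, each $g_i$ is a vector space over the common residue field $k$, and the inclusion $g_i\hookrightarrow\gp_L^{\,i}/\gp_L^{\,i+1}\cong k$ shows $\dim_k g_i\le 1$. Moreover $g_i\ne 0$ precisely when some element of $V$ has valuation exactly $i$; rescaling by a suitable power of $\pi_K$ (which shifts $v_L$ by $n$, as $L/K$ is totally ramified) shows this holds if and only if $(i\bmod n)\in s(V)$. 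Letting $i$ run over $0,1,\dots,n-1$, each residue class is met once, so $\sum_{i=0}^{n-1}\dim_k g_i=\#s(V)$.

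It remains to identify this sum with $\dim_K V$. The module $V_{\ge 0}=V\cap\Oe_L$ is a submodule of the free $\Oe_K$-module $\Oe_L$ of rank $n$, hence free; and since every element of $V$ can be scaled by a power of $\pi_K$ into $\Oe_L$, it spans $V$ over $K$, so its rank is $d:=\dim_K V$. Because $v_L(\pi_K)=n$, one checks $V_{\ge n}=\pi_K V_{\ge 0}$, whence $V_{\ge 0}/V_{\ge n}\cong V_{\ge 0}/\pi_K V_{\ge 0}\cong k^{d}$ has dimension $d$ over $k$. On the other hand the filtration $V_{\ge 0}\supseteq V_{\ge 1}\supseteq\cdots\supseteq V_{\ge n}$ has successive quotients $g_0,\dots,g_{n-1}$, so $\dim_k(V_{\ge 0}/V_{\ge n})=\sum_{i=0}^{n-1}\dim_k g_i$. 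Combining the two computations gives $d=\#s(V)$. The only delicate points are the freeness and correct rank of the lattice $V\cap\Oe_L$ and the identity $V_{\ge n}=\pi_K V_{\ge 0}$, both of which rest on total ramification; once these are in place the counting is immediate and handles both inequalities at once.
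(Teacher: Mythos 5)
Your proof is correct, and it takes a genuinely different route from the paper's. The paper chooses a set $S\subset V$ mapping bijectively onto $s(V)$, proves $S$ is $K$-linearly independent by the same ``distinct valuations'' observation you use as a warm-up, and then shows $S$ spans $V$: writing $W$ for its span and $W^0=W\cap\Oe_L\subset V^0=V\cap\Oe_L$, it deduces $V^0=W^0+\gp_K V^0$ from the equality of valuation sets and concludes $V^0=W^0$ by Nakayama's lemma. You never exhibit a basis: you compute $\dim_K V$ as $\dim_k\bigl(V^0/\pi_K V^0\bigr)$ using freeness of the lattice $V^0$ (your $V_{\ge 0}$) over the discrete valuation ring $\Oe_K$, and then slice $V^0/\pi_K V^0=V^0/V_{\ge n}$ along the $\gp_L$-adic filtration into $n$ graded pieces, each of $k$-dimension at most $1$ and nonzero exactly for the residue classes in $s(V)$. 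Both arguments rest on the same two consequences of total ramification (the residue field of $L$ equals $k$, and $v_L(\pi_K)=n$), but your filtration count replaces the paper's Nakayama step --- whose key input ``$V^0=W^0+\gp_K V^0$'' is asserted in one line and really requires a finite successive-approximation argument over the residue field --- by a purely additive dimension count that delivers both inequalities at once. What the paper's proof buys in exchange is the slightly stronger conclusion that \emph{any} set of representatives of $s(V)$ is automatically a $K$-basis of $V$; what yours buys is self-containedness, since each ingredient (freeness and rank of $V^0$, the identity $V_{\ge n}=\pi_K V^0$, and $\dim_k g_i\le 1$) is verified directly.
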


\begin{proof}
Let $S$ be a subset of $V$ such that $v$ maps $S$ bijectively to $s(V)$.
We will show that $S$ is a basis of $V$ over $K$.

Note that in a non-trivial $K$-linear combination of elements of $S$,  all
non-zero terms have valuations which are distinct modulo~$n$. Thus, these
valuations are distinct and their minimum is the valuation of the sum.
In particular, this sum is not zero in $L$,
and it follows that $S$ is a linearly independent set over $K$.

Now let $W$ be the sub-$K$-vector space of $V$ generated by~$S$.
Consider the finitely generated $\Oe_K$-submodules
$W^0=W\cap \Oe_L\subset V^0=V\cap \Oe_L$ of $\Oe_L$.
Using the fact that $v(W^0\backslash\{0\})=
v(V^0\backslash\{0\})$ one sees that $V^0=W^0+\gp_KV^0$.
Nakayama's lemma then implies that $V^0=W^0$.
It follows that $V=KV^0=KW^0=W$.
\end{proof}

Recall that we have a non-degenerate symmetric
$K$-bilinear form on $L$ given by $(x,y) \mapsto \Tr_{L/K}(xy)$.
For any sub-$K$-vector space $V$ of $L$, the orthogonal space
$V^\perp\subset L$ is isomorphic to the $K$-dual of $L/V$.

\begin{lem} \label{duality}
Let $\overline d=(-d_{L/K}-1\bmod n\Z)\in \Z/n\Z$.
For every sub-$K$-vector space $V$ of $L$ the set
$s(V^\perp)$ is the complement in $(\Z/n\Z)$ of the set $\overline d - s(V)$.
\end{lem}
\begin{proof}
For non-zero $x\in V$ and $y\in V^\perp$ we have $\Tr_{L/K}(xy)=0$,
so that $v(x)+v(y)=v(xy)\ne \overline d$ by Proposition \ref{prop1}.
It follows that $\overline d\not \in s(V)+s(V^\perp)$, so that
$s(V^\perp)$ is contained in the complement in $(\Z/n\Z)$ of the set $\overline d - s(V)$.
One sees with Lemma \ref{NiceLemma} that these two sets have the same cardinality,
the codimension over $K$ of $V$ in $L$, so they are equal.
\end{proof}

For example, taking $V=K$, the orthogonal space $V^\perp$ is the kernel of the trace,
and $s(V^\perp)=(\Z/n\Z)\backslash \{\overline d\}$.

\begin{proof}[Proof of Proposition~\ref{VCP}]
If $L/K$ is not totally ramified, then 
Proposition \ref{prop1} implies that $\VC(L/K)$ is false;
 we may thus assume that $L/K$ is totally ramified of degree $n$.

Clearly, $\VC(L/K)$ holds if and only if no $K[G]$-submodule $V$
strictly contained in $L$ contains an element $x$ with $v(x)=
\overline d$, where $\overline d=(-d_{L/K}-1\bmod n\Z)\in \Z/n\Z$.
This means that $\overline d\not \in s(V)$ for all such~$V$.
By duality, the map $V \mapsto W=V^\perp$ gives a bijection from the set
of $K[G]$-submodules $V$ of $L$ that are strictly contained in $L$
to the set of non-zero $K[G]$-submodules $W$ of~$L$.
By Lemma \ref{duality}, we have $\overline d\not \in s(V) \iff 0 \in s(V^\perp)$,
so we deduce that $\VC(L/K)$ holds if and only if
$0\in s(W)$ for every non zero $K[G]$-submodule
$W$ of $L$. Thus we see that $\VC(L/K)$ is equivalent to condition $(2)$.

It remains to show that condition $(2)$ implies that $[L:K]$ is a power of the
residue characteristic $p$. To see this, let $L'/K$ be the maximal tamely
ramified subextension of $L$ over $K$.
Then condition $(2)$ also holds for $L'/K$. So, by what we proved
already, $\VC(L'/K)$ holds. Since $L'/K$ is tamely ramified, we have $d_{L'/K}=e_{L'/K}-1$ \cite[Ch.~III,~\S6,~Prop.~13]{SerCL}.
Then $\VC(L'/K)$ implies that non-zero elements of $K$ are normal basis
generators for $L'$ over $K$, so $L'=K$.
\end{proof}

\begin{ex}\label{z32}
Let $K=\Q_2(\sqrt{-2})$ and for $n$ a power of $2$ let
$\zeta_n$ denote a root of unity of order $n$ in a fixed algebraic closure of $K$.
For $d=2,4,8, 16, \ldots$ the field $L_d=\Q_2(\zeta_{4d})$ is cyclic of order $d$
over $K$, and its Galois group $G_d$ is generated by the automorphism
$\sigma\colon\;\zeta_{4d}\mapsto\zeta_{4d}^3$.

To check whether condition $(2)$ of Proposition \ref{VCP} holds for the
extension $L_4/K$, note that the minimal non-zero $K[G_4]$-submodules
of $L_4$ are the kernels of the elements $f(\sigma)$ acting on $L_4$,
where $f$ ranges over the irreducible factors
$X-1$, $X+1$ and $X^2+1$ of
$X^4-1\in K[X]$.
Thus, 
the minimal $K[G_4]$-submodules of $L_4$ are $K$ and $\ker\Tr_{L_2/K}$, and $\ker\Tr_{L_4/L_2}$.
These contain the units: $1, \zeta_{4}$ and $\zeta_{16}$. So, by
Proposition \ref{VCP}, $\VC(L_4/K)$ holds.

Let us try the same for $L_8/K$.  The polynomial $X^8-1$ factors over $K$
into irreducible polynomials as follows:
$$
X^8-1=(X-1)(X+1)(X^2+1) (X^2+\sqrt{-2} X -1) (X^2-\sqrt{-2} X +1),
$$
so in addition to the minimal submodules we found inside $L_4$, which we know contain units,
we need to consider two sub-$K[G_8]$-modules of $K$-dimension $2$ inside $\ker \Tr_{L_8/L_4}$.

Now $\zeta_{32}$ is contained in $\ker_{L_8/L_4}$, so let us put
$x=\sigma^2(\zeta_{32})-\sqrt{-2}\sigma(\zeta_{32})-\zeta_{32}$.
Then the sub-$K$-vector space $V$ of $L$ generated by $x$ and $\sigma(x)$ is
a minimal $K[G]$-submodule of $L$.
One now checks with a computation that $v_{L_8}(x)=10$ and $v_{L_8}(\sigma(x)-x)=14$, so that
$\{2,6\}\subset s(V) \subset (\Z/8\Z)$.
With Lemma \ref{NiceLemma} we see that $s(V)=\{2,6\}$, and
it follows from Proposition \ref{VCP} that $\VC(L_8/K)$ does not hold.
From the proof above we see that an element of $L$ of valuation $-d_{L_8/K}-1$ can be found inside
the $K[G_8]$-submodule $V^\perp$ of $L_8$.
\end{ex}

We conclude this section with some easy consequences of Proposition \ref{VCP}.

\begin{cor}\label{subcor}
If $K\subset L \subset M$ are finite extensions of local fields with $M$ and $L$
both Galois over $K$, then
$\VC(M/K)$ implies $\VC(L/K)$.
\end{cor}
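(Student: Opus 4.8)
The plan is to use the characterization of $\VC$ via Proposition~\ref{VCP}, rather than arguing directly about normal basis generators. The statement to prove is Corollary~\ref{subcor}: if $K \subset L \subset M$ with both $L$ and $M$ Galois over $K$, then $\VC(M/K)$ implies $\VC(L/K)$.

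First I would reduce to checking the two conditions of Proposition~\ref{VCP} for $L/K$, assuming they hold for $M/K$. Condition~$(1)$ transfers immediately: if $M/K$ is totally ramified of degree a power of $p$, then so is the subextension $L/K$, since $e_{L/K} \mid e_{M/K}$, the residue extension of $L/K$ is a subextension of that of $M/K$, and $[L:K] \mid [M:K]$. So the real content is condition~$(2)$: I must show that every non-zero $K[\Gal(L/K)]$-submodule of $L$ contains an element of valuation~$0$, knowing the analogous statement for $M$.

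The key point is that $L$ sits inside $M$ as a $K$-subspace stable under $\Gal(M/K)$, via the natural surjection $\pi\colon \Gal(M/K) \twoheadrightarrow \Gal(L/K)$. Concretely, $K[\Gal(M/K)]$ acts on $L$ through $\pi$, so any $K[\Gal(L/K)]$-submodule $U$ of $L$ is automatically a $K[\Gal(M/K)]$-submodule of $M$. Since $U$ is non-zero, condition~$(2)$ for $M/K$ gives an element $u \in U$ with $v_M(u) = 0$. The remaining step is to transfer this back: because $L/K$ is totally ramified, $L \cap \Oe_M^* = \Oe_L^*$, so $v_M(u) = 0$ forces $v_L(u) = 0$ as well (the valuations $v_M$ and $v_L$ agree on $L^*$ up to the factor $e_{M/L}$, which is $1$ on units anyway). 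Hence $u \in U$ has $v_L$-valuation $0$, establishing condition~$(2)$ for $L/K$.

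The main obstacle, if any, is making precise that a $\Gal(L/K)$-submodule of $L$ really is a $\Gal(M/K)$-submodule, i.e. that the module structures are compatible through $\pi$ --- but this is essentially formal, since an element of $G = \Gal(M/K)$ acts on $L$ by its image in $\Gal(L/K)$. Once that compatibility is noted, the argument is short; there is no delicate valuation computation beyond observing that valuation~$0$ elements of $M$ lying in $L$ have valuation~$0$ in $L$. I would present this as a clean application of Proposition~\ref{VCP}, emphasizing that the duality machinery does all the work and the corollary is purely a submodule-restriction statement.
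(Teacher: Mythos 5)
Your proposal is correct and is exactly the argument the paper intends: Corollary~\ref{subcor} is stated there without proof as one of the ``easy consequences of Proposition~\ref{VCP}'', and your observation that a non-zero $K[\Gal(L/K)]$-submodule of $L$ is automatically a non-zero $K[\Gal(M/K)]$-submodule of $M$ (so condition~$(2)$ transfers, with condition~$(1)$ transferring trivially and units of $M$ lying in $L$ being units of $L$) is precisely that intended easy consequence.
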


Note that, in the setting of this corollary, a normal element for $M$ over $K$ is not
necessarily normal over $L$; see \cite{BlessJohn86} for an easy example,
and~\cite{Faith57, Hachen97}.  We do not know whether the implication
$\VC(M/K)\implies \VC(M/L)$ always holds, even for abelian extensions.  
However, it does hold in a particular setting of Kummer extensions --- see Lemma~\ref{topkummer}.

\begin{lem}\label{tamebasechange}
Let $L/K$ be a totally ramified Galois extension of local fields
whose degree $n$ is a power of the residue characteristic $p>0$.
For every finite tamely ramified extension $\KK/K$ we have
$$
\VC(\KK L/\KK) \implies \VC(L/K).
$$
\end{lem}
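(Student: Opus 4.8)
The plan is to deduce $\VC(L/K)$ from the characterization in Proposition~\ref{VCP}, transporting its submodule condition~$(2)$ across the base change $\KK/K$ by comparing the valuation-sets $s(-)$ introduced in Section~3.

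First I would settle the ramification geometry of the compositum. Every intermediate field of $L/K$ is totally ramified of $p$-power degree, while every intermediate field of $\KK/K$ is tamely ramified; hence $L\cap\KK=K$, so $\KK L/\KK$ is Galois, restriction to $L$ gives an isomorphism $\Gal(\KK L/\KK)\xrightarrow{\sim}\Gal(L/K)=G$, and $[\KK L:\KK]=[L:K]=n$. Writing $e=e_{\KK/K}$ (so $p\nmid e$) and letting $e',f'$ be the ramification index and residue degree of $\KK L/\KK$, the tower identities $e'e=e_{\KK L/L}\,n$ and $e'f'=n$ give $e_{\KK L/L}=e/f'$, whence $f'\mid e$; since also $f'\mid n$ and $\gcd(e,n)=1$, we get $f'=1$. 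Thus $\KK L/\KK$ is totally ramified of degree $n$ and $e_{\KK L/L}=e$, so that $v_{\KK L}|_L=e\,v_L$.

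The heart of the argument is a comparison of valuation-sets. For a sub-$K$-vector space $V\subseteq L$ put $\VV=\KK V\subseteq\KK L$, so that $\dim_{\KK}\VV=\dim_K V=:r$. By Lemma~\ref{NiceLemma} I may choose a $K$-basis $x_1,\dots,x_r$ of $V$ whose valuations $v_L(x_j)$ are pairwise distinct modulo $n$; these $x_j$ also form a $\KK$-basis of $\VV$. Since $v_{\KK L}(x_j)=e\,v_L(x_j)$ and multiplication by $e$ is a bijection of $\Z/n\Z$ (as $\gcd(e,n)=1$), the classes $v_{\KK L}(x_j)\bmod n$ are $r$ distinct elements, forming the set $e\cdot s(V)$. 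As the valuation-set of $\VV$ relative to $\KK L/\KK$ has exactly $r=\dim_{\KK}\VV$ elements by Lemma~\ref{NiceLemma}, it must equal $e\cdot s(V)$.

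Finally I would assemble the pieces. Let $V$ be a non-zero $K[G]$-submodule of $L$. Because $G=\Gal(\KK L/\KK)$ acts on $\KK L$ fixing $\KK$ and restricts to the given action on $L$, the space $\VV=\KK V$ is a non-zero $\KK[G]$-submodule of $\KK L$. The hypothesis $\VC(\KK L/\KK)$, via condition~$(2)$ of Proposition~\ref{VCP}, yields $0\in e\cdot s(V)$, and injectivity of multiplication by $e$ on $\Z/n\Z$ forces $0\in s(V)$; that is, $V$ contains an element of valuation~$0$. As $L/K$ is totally ramified of $p$-power degree $n$, Proposition~\ref{VCP} then gives $\VC(L/K)$. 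The main obstacle is the second step: showing that $\KK L/\KK$ is totally ramified with $e_{\KK L/L}=e$, since the entire transfer of condition~$(2)$ rests on the clean comparison $v_{\KK L}|_L=e\,v_L$ with $e$ prime to $n$. Once that is in hand, the remainder is formal.
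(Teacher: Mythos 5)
Your proof is correct and follows essentially the same route as the paper's: both deduce the lemma from condition $(2)$ of Proposition~\ref{VCP} by showing $s(\KK V)=e_{\KK/K}\,s(V)$ via Lemma~\ref{NiceLemma} and the coprimality of $e_{\KK/K}$ with $n$. The only difference is that you spell out the ramification bookkeeping (that $\KK L/\KK$ is totally ramified of degree $n$ with $e_{\KK L/L}=e_{\KK/K}$) which the paper treats as obvious.
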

\begin{proof}
Put $\LL=\tilde K L$. Note first that $[\LL: \KK]=[L:K]$, because the tame part of $L/K$ is trivial.
Let $V$ be a $K$-submodule of $L$. Put $\VV= \KK V$; it is a $\KK$-submodule of $\LL$. Note that we have the obvious inclusion $$e_{\KK/K} s(V) \subset s(\VV).$$ 
Since $e_{\KK/K}$ is coprime to $p$ and therefore to $n$, and since $s(V)$ and $s(\VV)$ are both of cardinality $\mathrm{dim}_K(V)$ (Lemma \ref{NiceLemma}), it follows that this inclusion is in fact an equality.  Thus, $V$ contains a unit of $L$ (i.e., $0$ belongs to $s(V)$) if and only if
$\VV$ contains a unit of $\LL$ (i.e., $0$ belongs to $s(\VV)$). Assume now that $\VC(\LL /\KK)$ holds. Then Proposition \ref{VCP} implies that
 the space $\VV$ contains a unit, so that $V$ contains a unit too. The $K$-submodule $V$ of $L$ being arbitrary,
again by Proposition \ref{VCP} it follows that $\VC(L/K)$ holds.
\end{proof}

\section{Applications to abelian extensions}\label{ProofThm2}

In this section we consider only abelian extensions $L/K$. 
We will show that Theorems \ref{thm1} and \ref{thm2} hold
by using Proposition \ref{VCP}.

\begin{proof}[Proof of Theorem~\ref{thm1}]
Suppose that $G=\Gal(L/K)$ is of exponent $m$, and that $K$ contains a primitive $m\/$th root of unity. Put $n=\#G$.
By Kummer theory there exists a $K$-basis $R$ of $L$ such that
$\{r^m \colon\; r\in R\}$ is a full set of coset representatives of $K^*\cap
L^{*m}$ modulo $K^{*m}$.
Thus, $L/K$ is a unit root Kummer extension if and only if $m \mid v_K(r^m)$ for all $r\in R$.

The $K$-algebra $K[G]$ is totally split, and $L$ is free of rank $1$ over
$K[G]$, so $L$ is the direct sum of its $n$ distinct minimal non-zero
$K[G]$-submodules, and they all have dimension $1$ over $K$. These
submodules are therefore the modules $Kr$ with $r\in R$.

By Proposition \ref{VCP}, we know that
$\VC(L/K)$ holds if and only if all minimal non-zero $K[G]$-submodules of $L$
contain an element of $\Oe_L^*$. The result now follows by noting that
$$Kr\cap \Oe_L^* \ne\emptyset \iff n\mid v_L(r) \iff nm \mid v_L(r^m) \iff m \mid v_K(r^m).$$
\end{proof}

\begin{ex}\label{cp2}
Suppose that $K$ contains $\mu_{p^2}$, that $u\in \Oe_K^*$ is not a $p$-th power
and that $\pi\in K^*$ satisfies $v_K(\pi)=1$. Then $u\pi^p$ is
not a $p$-th power in $K$, so
$L=K(\root p^2 \of {u\pi^p})$ is a cyclic extension of degree~$p^2$.
It is a Kummer extension, and by Theorem~\ref{thm2} it does not
satisfy $\VC(L/K)$. However, the intermediate field
$M=K(\root p \of u )$ satisfies both $\VC(M/K)$ and $\VC(L/M)$.
Note that $-1-d_{L/K}\equiv -1-d_{L/M} \not\equiv 0 \bmod p$.
\end{ex}

In order to prove Theorem \ref{thm2} we first present two auxilliary results.

\begin{lem}\label{cyclic}
If $L/K$ is an abelian extension, and $K$ has characteristic $0$, then
$\VC(L/K)$ holds if and only if $\VC(E/K)$ holds for all
intermediate fields $K\subset E \subset L$ for which $E$ is
cyclic over~$K$.
\end{lem}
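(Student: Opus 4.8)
The plan is to deduce the lemma from the intrinsic characterization of $\VC$ given in Proposition~\ref{VCP}. The forward implication is immediate: if $E/K$ is cyclic then it is Galois, so Corollary~\ref{subcor}, applied with $M=L$, gives $\VC(L/K)\implies\VC(E/K)$. All the content lies in the converse, so from now on I assume $\VC(E/K)$ holds for every intermediate field $E$ with $E/K$ cyclic, and I aim to verify the two conditions of Proposition~\ref{VCP} for $L/K$ itself: total ramification (the power-of-$p$ clause being automatic from condition~(2), as noted after Proposition~\ref{VCP}) together with the requirement that every non-zero $K[G]$-submodule of $L$ contain a unit.

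First I would settle total ramification. By Proposition~\ref{VCP} each cyclic $E/K$ under consideration is totally ramified. If $L/K$ were not totally ramified, its maximal unramified subextension $L_0/K$ would be non-trivial and abelian, hence would admit a non-trivial cyclic quotient; by Galois theory this produces a non-trivial cyclic subextension $E/K$ of $L_0/K$. Such an $E/K$ is simultaneously unramified and, by hypothesis, totally ramified, forcing $E=K$, a contradiction. Thus $L/K$ is totally ramified.

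The heart of the argument is condition~(2), and by the remark following Proposition~\ref{VCP} it suffices to treat a minimal non-zero $K[G]$-submodule $S\subset L$. Here I would use that $\char K=0$, so $K[G]$ is semisimple and $S$ is simple. Since $G$ is abelian, Schur's lemma gives that $D=\End_{K[G]}(S)$ is a commutative division algebra, i.e.\ a field, and the image of $K[G]$ in $\End_K(S)$ is commutative, which forces $S$ to be one-dimensional over $D$. Hence $G$ acts on $S$ through a character $\rho\colon G\to D^*$ whose image, being a finite subgroup of $D^*$, is cyclic. Setting $H=\ker\rho$ and $E=L^H$, every element of $S$ is fixed by $H$, so $S\subset E$, while $\Gal(E/K)\cong G/H\cong\rho(G)$ is cyclic. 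Thus $S$ is a minimal non-zero $K[\Gal(E/K)]$-submodule of the cyclic extension $E/K$.

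Finally I would invoke the hypothesis $\VC(E/K)$: by Proposition~\ref{VCP} the submodule $S$ contains some $s$ with $v_E(s)=0$, and since $v_L(s)=e_{L/E}\,v_E(s)=0$ this $s$ is a unit of $L$ lying in $S$. As $S$ was an arbitrary minimal submodule, condition~(2) holds for $L/K$, and together with total ramification Proposition~\ref{VCP} yields $\VC(L/K)$. The step I expect to be most delicate is the representation-theoretic reduction in the third paragraph: establishing that a simple $K[G]$-module for abelian $G$ in characteristic $0$ is genuinely cut out by a cyclic quotient, and that it physically lives inside the corresponding fixed field $L^H$, is exactly what lets the cyclic hypothesis bite, and it is the place where the characteristic-zero assumption, ensuring semisimplicity of $K[G]$, is essential.
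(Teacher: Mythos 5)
Your proof is correct and follows essentially the same route as the paper: both arguments show that a minimal non-zero $K[G]$-submodule is acted on through a field (the image of the commutative semisimple algebra $K[G]$), so that $G$ acts via a character into that field's multiplicative group, whose finite image is cyclic, placing the submodule inside a cyclic subextension $E=L^H$ where the hypothesis $\VC(E/K)$ and Proposition~\ref{VCP} produce a unit. The only difference is cosmetic --- you package the representation theory via Schur's lemma and density where the paper uses ``$K[G]$ is a product of fields,'' and you spell out the total-ramification check and the forward direction via Corollary~\ref{subcor}, which the paper leaves implicit.
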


\begin{proof}
Let $V$ be a minimal non-zero $K[G]$-submodule of $L$. Since the group ring $K[G]$ is a product of fields, the image $F$ of $K[G]$ in $\End_K(V)$ is a field.
Let $H$ be the kernel of the canonical map $G\to F^*$, and let $E=L^H$. Then  $V \subset E$, and $G/H=\mathrm{Gal}(E/K)$ is cyclic because it embeds into $F^*$. We have just shown that every minimal non-zero $K[G]$-submodule of $L$
is contained inside a field $E$ with $K\subset E\subset L$ and $E/K$ cyclic.
The lemma now follows from Proposition \ref{VCP}.
\end{proof}

\begin{lem}\label{topkummer}
Let $M/K$ be a Galois extension of local fields
and let $L$ be a subfield of $M$ which is normal over~$K$.
If $M/L$ is abelian of exponent $r$ and $K$ contains a
root of unity of order $r$ then
$$
\VC(M/K) \implies \VC(M/L).
$$
\end{lem}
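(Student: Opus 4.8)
The plan is to read off $\VC(M/L)$ from the character-eigenspace decomposition of $M$ and to feed in $\VC(M/K)$ through the characters fixed by the whole Galois group. First I would dispose of the ramification hypotheses: by Proposition~\ref{VCP}, $\VC(M/K)$ forces $M/K$ to be totally ramified with $[M:K]$ a power of the residue characteristic~$p$, and both properties pass to the subextension $M/L$, so condition~(1) of Proposition~\ref{VCP} holds for $M/L$. Write $H=\Gal(M/L)$, abelian of exponent~$r$. Since $\mu_r\subset K\subseteq L$ and $|H|$ (a power of $p$) is invertible in $K$ because $\car K=0$, the idempotents $e_\chi=\tfrac1{|H|}\sum_{h\in H}\chi(h)^{-1}h$ lie already in $K[H]$, so $M=\bigoplus_{\chi\in\widehat H}M_\chi$ splits into the eigenspaces $M_\chi=e_\chi M$, each a one-dimensional $L$-subspace. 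As $\mu_r\subset L$, these $M_\chi$ are exactly the minimal nonzero $L[H]$-submodules of $M$; hence by condition~(2) of Proposition~\ref{VCP} the whole task reduces to showing that \emph{every} $M_\chi$ contains a unit of $M$.

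The crucial structural remark is that this decomposition is a grading: $M_\chi\cdot M_{\chi'}\subseteq M_{\chi\chi'}$, and since $M$ is a field a product of nonzero elements is nonzero, so it generates the line $M_{\chi\chi'}$. Writing $q=[M:L]$ and letting $c(\chi)\in\Z/q\Z$ be the valuation $v_M$ of a generator of $M_\chi$ reduced modulo $q$ (well defined, since rescaling by $\lambda\in L^*$ changes $v_M$ by $v_M(\lambda)\in q\Z$), it follows that $c\colon\widehat H\to\Z/q\Z$ is a \emph{group homomorphism}, and $M_\chi$ contains a unit precisely when $c(\chi)=0$. So I must prove $c\equiv0$. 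Moreover $c$ is invariant under the conjugation action of $G=\Gal(M/K)$ on $\widehat H$: for $g\in G$ one checks $gM_\chi=M_{\chi^g}$ with $\chi^g(h)=\chi(g^{-1}hg)$ (using $H\trianglelefteq G$ and $\mu_r\subset K$), and $g$ preserves $v_M$, whence $c(\chi^g)=c(\chi)$.

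Now I bring in $\VC(M/K)$. If $\chi$ is fixed by $G$, then $M_\chi$ is $G$-stable, hence a nonzero $K[G]$-submodule of $M$, so by Proposition~\ref{VCP}(2) it contains a unit and $c(\chi)=0$. Thus $c$ vanishes on $\widehat H^{\,G}$. In particular, when $G$ is abelian---which is the situation in the intended application to Theorem~\ref{thm2}, where $M/K$ is abelian---the conjugation action is trivial, every character is $G$-fixed, and therefore $c\equiv0$, yielding $\VC(M/L)$.

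The remaining, and genuinely harder, case is a non-abelian $G$: then $G$ acts nontrivially on $\widehat H$, and vanishing of $c$ on $\widehat H^{\,G}$ does not by itself force $c\equiv0$ (the invariants need not generate $\widehat H$ modulo the augmentation submodule). Here one must use $\VC(M/K)$ on $K[G]$-submodules that are \emph{not} $L$-stable. The plan is to consider the $G$-stable subfield $F=M^{(\ker c)^\perp}$ cut out by $\ker c$: it is Galois over $K$, with $\Gal(M/F)$ cyclic and central in $G$ (the target $\Z/q\Z$ of $c$ carries the trivial action), and when $c\neq0$ it is a \emph{proper} $K[G]$-submodule. Since $s_K(F)=[M:F]\,\Z/[M:K]\Z$, Proposition~\ref{VCP} gives $[M:F]\nmid d_{M/K}+1$, which via the tower formula for the different means $[M:F]\nmid d_{M/F}+1$. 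The hard part will be the converse comparison: showing that the central cyclic extension $M/F$ forces $[M:F]\mid d_{M/F}+1$, a contradiction. Controlling $d_{M/F}$ modulo $[M:F]$ is delicate precisely because the unit-root property is \emph{not} transitive, so $M/F$ need not be non-unit-root over $F$ even though $F/L$ is the maximal unit-root subextension (compare Example~\ref{cp2}); it is exactly at this point that the stronger hypothesis $\mu_r\subset K$---which makes the Kummer data of $M/L$ a module over $\Gal(L/K)$ and permits a semilinear descent onto the stabilizer fields $E=M^{G_\chi}$---has to be exploited in full.
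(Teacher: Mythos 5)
Your argument through the abelian case is sound: the grading $M_\chi\cdot M_{\chi'}\subseteq M_{\chi\chi'}$, the homomorphism $c\colon\widehat H\to\Z/q\Z$, its invariance under the conjugation action of $G=\Gal(M/K)$, and the vanishing of $c$ on $G$-fixed characters via Proposition~\ref{VCP} are all correct, and they do settle the lemma when $G$ is abelian (which is indeed all that the proof of Theorem~\ref{thm2} uses). But the lemma is stated for an \emph{arbitrary} Galois extension $M/K$ with $L$ normal in $M/K$, and nothing in its hypotheses makes $G$ abelian. Your treatment of the non-abelian case is only a program, and it ends with an admitted unproven claim: that the central cyclic extension $M/F$ (with $F=M^{(\ker c)^\perp}$) satisfies $[M:F]\mid d_{M/F}+1$. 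Under $\VC(M/K)$, the lemma asserts precisely that the situation $c\neq 0$ never arises, so the ``hard part'' you defer is essentially the full statement again in disguise; no independent route to it is offered, and it is not clear one exists along these lines (note that a $K[G]$-submodule such as the span of a $G$-orbit of eigenspaces can contain units without any single eigenspace in the orbit containing one, because sums across eigenspaces can have valuations outside the classes $c(\chi)$). So there is a genuine gap: as written, the proof covers only abelian $G$.

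The gap can be closed by a much shorter argument, which is the one the paper uses, and it is uniform in $G$. The key is to show that every $x\in M$ that is normal over $K$ is automatically normal over $L$. In your own language: the idempotents $e_\chi$ of $L[H]$ already lie in $K[H]\subset K[G]$ (this is exactly where $\mu_r\subset K$ enters); if $x$ is normal over $K$ then its annihilator in $K[G]$ is trivial, hence $e_\chi x\neq 0$ for every $\chi\in\widehat H$, i.e.\ all eigencomponents of $x$ are nonzero, i.e.\ $x$ is normal over $L$ --- triviality of the annihilator in $K[H]$ and in $L[H]$ are literally the same condition, both algebras being split with the same idempotents. Now take $d$ such that all elements of valuation $d$ are normal over $K$: they are then all normal over $L$, in particular all have nonzero trace to $L$, so Proposition~\ref{prop1} forces $d\equiv -d_{M/L}-1\bmod [M:L]$; since normality over $L$ is preserved by scaling by $L^*$ and $v_M(L^*)=[M:L]\Z$, this yields $\VC(M/L)$. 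No hypothesis on the structure of $G$ is needed, and no computation with differents: your homomorphism $c$ (which is really the engine of the paper's Lemma~\ref{hard1}) vanishes \emph{a posteriori} rather than being proved zero directly.
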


\begin{proof}
Suppose that $x\in M$ is normal over $K$. We will show that $x$ is also
normal over $L$.  We write $G=\Gal(M/K)$ and $H=\Gal(M/L)$. Then we
know that $x$ has trivial annihilator in the ring $K[G]$, so it also has
trivial annihilator in $K[H]$.  The latter is a totally split $K$-algebra.
Since $L[H]$ is a totally split $L$-algebra with the same number of components,
each of its nonzero ideals contains a nonzero element of $K[H]$. Thus, $x$ also
has a trivial annihilator in $L[H]$, and $x$ is normal over~$L$.

If we assume that $\VC(M/K)$ holds, then for some $d\in \Z$ all $x\in M^*$ of
valuation $d$ are normal over $K$. We just showed that all these $x$ are
then normal for $M/L$ too. With Proposition \ref{prop1} this implies
that $d\equiv -d_{M/L}-1\bmod [M:L]$, and it follows that $\VC(M/L)$ holds.
\end{proof}

The core of our argument lies in the proof of the following two lemmas.

\begin{lem}\label{hard1}
Let $L/K$ be a totally ramified abelian extension of local fields of mixed
characteristic $(0, p)$, whose degree is a power of $p$.
Let $\mu_p$ be the group of $p\/$th roots of unity
in an algebraic closure of $K$. 
If $\mu_p\not\subset K$ and
$L(\mu_p)/K(\mu_p)$ is a Kummer extension then $\VC(L(\mu_p)/K(\mu_p))$ holds.
\end{lem}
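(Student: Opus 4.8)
The plan is to reduce the statement to Theorem~\ref{thm1} and then exploit the Galois action of $\Gal(K(\mu_p)/K)$ on the Kummer data. Write $\KK=K(\mu_p)$ and $\LL=L(\mu_p)=L\KK$. Since $\mu_p\not\subset K$, the extension $\KK/K$ is a nontrivial cyclic extension whose degree divides $p-1$, and in particular it is tamely ramified. First I would record that $\LL/\KK$ is totally ramified of $p$-power degree: because $L/K$ is totally ramified of $p$-power degree and $\KK/K$ is tame, we have $L\cap\KK=K$, whence $[\LL:\KK]=[L:K]$ is a power of $p$; moreover $f(\LL/K)=f(\LL/L)f(L/K)$ divides $[\LL:L]\mid p-1$ and is thus prime to $p$, while $f(\LL/\KK)\mid f(\LL/K)$ is a power of $p$, forcing $f(\LL/\KK)=1$. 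As $\LL/\KK$ is assumed to be Kummer, Theorem~\ref{thm1} applies, and it remains only to prove that $\LL$ is a unit root Kummer extension of $\KK$.

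Next I would set up Kummer theory and descend along $\Delta=\Gal(\KK/K)$. Let $m=p^b$ be the exponent of $\Gal(\LL/\KK)\cong\Gal(L/K)$, so that $\mu_m\subset\KK$, and let $W=\LL^{*m}\cap\KK^*$ be the Kummer radical, a $\Delta$-stable subgroup of $\KK^*$ since $\LL/K$ is Galois. Being a unit root extension means exactly that $v_{\KK}(W)\subset m\Z$. The key observation is that, because $L/K$ and $\KK/K$ are both abelian, the compositum $\LL/K$ is abelian, so every $\sigma\in\Gal(\LL/\KK)$ commutes with any lift $\tilde\delta\in\Gal(\LL/K)$ of $\delta\in\Delta$. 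Feeding this into the Kummer pairing $\Gal(\LL/\KK)\times W/\KK^{*m}\to\mu_m$ and using that $\tilde\delta$ acts on $\mu_m$ through the cyclotomic character $\chi\colon\Delta\to(\Z/m\Z)^*$, a short computation gives $\langle\sigma,\delta(w)\rangle=\langle\sigma,w\rangle^{\chi(\delta)}$ for all $\sigma$ and $w$. By non-degeneracy of the pairing this yields the descent identity
$$\delta(w)\equiv w^{\chi(\delta)}\pmod{\KK^{*m}}\qquad\text{for all }\delta\in\Delta,\ w\in W.$$

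Finally I would extract the valuation statement. Applying $v_{\KK}$ to the identity above and using that $\delta$, being an automorphism of $\KK$, preserves the valuation, one gets $v_{\KK}(w)=\chi(\delta)v_{\KK}(w)+m\,v_{\KK}(z)$ for some $z\in\KK^*$, that is $(\chi(\delta)-1)v_{\KK}(w)\equiv 0\pmod m$. Since $\KK=K(\mu_p)$, the group $\Delta$ acts faithfully on $\mu_p$, so the hypothesis $\mu_p\not\subset K$ guarantees an element $\delta\in\Delta$ with $\chi(\delta)\not\equiv 1\pmod p$; for this $\delta$ the integer $\chi(\delta)-1$ is prime to $p$, hence a unit modulo $m=p^b$, and therefore $v_{\KK}(w)\equiv 0\pmod m$. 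As $w\in W$ was arbitrary this proves $v_{\KK}(W)\subset m\Z$, so $\LL$ is a unit root Kummer extension of $\KK$ and $\VC(\LL/\KK)$ holds by Theorem~\ref{thm1}. The main obstacle is the descent identity of the second paragraph: getting the cyclotomic character to appear cleanly requires the commutativity coming from $\LL/K$ being abelian, and this is precisely where the whole argument, and the hypothesis $\mu_p\not\subset K$, does its work.
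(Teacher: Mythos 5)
Your proof is correct, and it reaches the conclusion by a genuinely different route than the paper, even though the arithmetic engine is the same. The paper never passes through Theorem~\ref{thm1} or the Kummer radical: it decomposes $\LL$ as a direct sum of the one-dimensional eigenspaces $E_\chi$ of the totally split algebra $\KK[G]$ (these are the minimal $\KK[G]$-submodules), picks $\sigma\in\Gal(\LL/K)$ acting on $\mu_m$ by raising to a power $c\not\equiv 1 \bmod p$, and compares valuation sets: $s(\sigma(E_\chi))=s(E_\chi)$ since $\sigma$ preserves valuations, while $\sigma(E_\chi)=E_{\chi^c}$ contains $x^c$ for $x\in E_\chi$, so the singleton $s(E_\chi)$ satisfies $(c-1)\,s(E_\chi)=\{0\}$ in $\Z/n\Z$, forcing $s(E_\chi)=\{0\}$; Proposition~\ref{VCP} then concludes. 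You instead work on the radical side of Kummer duality: the descent identity $\delta(w)\equiv w^{\chi(\delta)} \bmod \KK^{*m}$, obtained from commutativity of $\Gal(\LL/K)$ together with non-degeneracy of the Kummer pairing, and then valuations give $(\chi(\delta)-1)\,v_{\KK}(w)\equiv 0\bmod m$. In both arguments the punchline is identical --- the cyclotomic action is nontrivial mod $p$, so $c-1$ (your $\chi(\delta)-1$) is invertible modulo a power of $p$ --- and indeed the two arguments are dual to each other, the translation between ``each eigenspace contains a unit'' and ``$v_{\KK}(W)\subset m\Z$'' being exactly the chain of equivalences in the paper's proof of Theorem~\ref{thm1}. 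What your route buys: you land on the explicit statement that $\LL/\KK$ is a unit root Kummer extension, and you carefully verify that $\LL/\KK$ is totally ramified of $p$-power degree --- a hypothesis that both Theorem~\ref{thm1} and the paper's own use of Proposition~\ref{VCP} and the $s(\cdot)$ formalism require, but which the paper leaves implicit. What the paper's route buys: it stays entirely inside the valuation-set machinery of Section 3, needs neither the Kummer pairing nor its non-degeneracy, and is correspondingly shorter.
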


\begin{proof}
Put $\KK=K(\mu_p)$, $\LL=L(\mu_p)$ and $G=\Gal(L/K)=\Gal(\LL/ \KK)$. Let $m$ be the exponent of $G$.
Note first that $\KK[G]$ is a totally split
$\KK$-algebra, so $\LL$ is the direct sum of its minimal non-zero submodules,
which are exactly the eigenspaces $$E_\chi=\{x\in L\colon\; g x= \chi(g)x \text{ for all }g\in G\},$$
where $\chi$ ranges over $\Hom(G,\KK^*)$. 
We will consider the sets $s(E_\chi)$, where $s$ is as in Section 3 for the extension $\LL/\KK$.
Since $E_\chi$ is $1$-dimensional over $\KK$, these are one element sets.

By our assumptions, there is an element $\sigma\in\Gal(\LL/K)$ that acts
non-trivially on $\mu_p$, so it acts on $\mu_m\subset \KK^*$
by raising elements to the power $c$ for some $c\in \Z$, which is not $1$ modulo $p$.

Now on the one hand $s(\sigma(E_\chi))=s(E_\chi)$, because $\sigma$ preserves the valuation.
On the other hand, we have $x^c \in E_{\chi^c}=\sigma(E_\chi)$ for all $x\in E_\chi$,
so $s(\sigma(E_\chi))\supset c s(E_\chi)$. Since $c-1$ is coprime to $[\LL:\KK]$ 
and $ s(E_\chi)$ is a set consisting of a single element, this implies that $s(E_\chi)=\{0\}$.
By Proposition \ref{VCP}, it then follows that $\VC(\LL/\KK)$ holds.
\end{proof}

\begin{lem}\label{hard2}
Let $L/K$ be a totally ramified cyclic extension of local fields of
mixed characteristic $(0,p)$ whose degree $n$ is a power of $p$.
Let $r\mid n$ be the number of $n$-th roots of unity in $K^*$.
Assume that $p\mid r$, and if $p=2$ and $8\mid n$ assume that $r\ne 2$.
For the chain of fields $K\subset L_r\subset L_p\subset L$ where $[L:L_p]=p$ and $[L:L_r]=r$
we then have
$$\VC(L/L_r) \mathrm{\ and \ } \VC(L_p/K) \implies \VC(L/K).$$
\end{lem}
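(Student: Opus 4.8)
The plan is to apply Proposition~\ref{VCP}: writing $G=\Gal(L/K)=\langle\sigma\rangle$ (cyclic of order $n$) and setting $\tau=\sigma^{n/r}$, a generator of $G_r:=\Gal(L/L_r)$, it suffices to show that every minimal non-zero $K[G]$-submodule of $L$ contains a unit of $\Oe_L^*$. The starting observation is that $L/L_r$ is a cyclic Kummer extension of degree $r$, since $\mu_r\subset K\subset L_r$ and $G_r$ is cyclic of order $r$. Assuming $\VC(L/L_r)$, Theorem~\ref{thm1} shows $L/L_r$ is a \emph{unit root} Kummer extension, so there is a unit $\alpha\in\Oe_L^*$ with $L=L_r(\alpha)$, $\alpha^r\in\Oe_{L_r}^*$, and $\tau(\alpha)=\zeta\alpha$ for a primitive $r$th root of unity $\zeta\in K$. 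Because $\mu_r\subset K$, the eigenspace decomposition of $L$ under $G_r$ is defined over $K$, giving
\[
L=\bigoplus_{i=0}^{r-1}L_r\alpha^{i},
\]
where each summand $L_r\alpha^{i}$ is the $\zeta^{i}$-eigenspace of $\tau$, is $G$-stable (as $G$ is abelian, so $\sigma$ preserves $\tau$-eigenspaces), and contains the unit $\alpha^{i}$.

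Next I would classify the minimal submodules according to which summand they meet. For any minimal $V$, all characters $\chi$ occurring in $V\otimes_K K(\mu_n)$ are $\Gal(K(\mu_n)/K)$-conjugate, and since that group acts on $\mu_r\subset K$ trivially, the scalar $\chi(\tau)=\chi(\sigma)^{n/r}$ is constant on the orbit; hence $\tau$ acts on $V$ by a single $\zeta^{i}$ and $V\subset L_r\alpha^{i}$. Now $\tau^{r/p}=\sigma^{n/p}$ generates $\Gal(L/L_p)$, and it acts on $L_r\alpha^{i}$ by $\zeta^{ir/p}$, which is trivial exactly when $p\mid i$. Thus $\bigoplus_{p\mid i}L_r\alpha^{i}=L_p$, and any minimal $V$ inside this ``old'' part is a $K[\Gal(L_p/K)]$-submodule of $L_p$; the hypothesis $\VC(L_p/K)$ and Proposition~\ref{VCP} then supply a unit of $\Oe_{L_p}^*\subset\Oe_L^*$ in $V$. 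This disposes of every $V$ with $p\mid i$ unconditionally.

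It remains to treat the ``new'' summands $L_r\alpha^{i}$ with $p\nmid i$. Here I claim each such $L_r\alpha^{i}$ is already \emph{irreducible} as a $K[G]$-module; granting this, $V=L_r\alpha^{i}$ contains the unit $\alpha^{i}$ and we are done. Over $K(\mu_n)$ the space $L_r\alpha^{i}$ splits into the eigenlines for the characters $\chi$ with $\chi(\sigma)=\xi$ a primitive $n$th root of unity satisfying $\xi^{n/r}=\zeta^{i}$; there are exactly $n/r=\dim_K L_r\alpha^{i}$ such $\xi$, and $\Gal(K(\mu_n)/K)$ permutes them through the cyclotomic character, with image inside $U:=1+r\Z/n\Z\subset(\Z/n\Z)^*$ (the residues fixing $\mu_r\subset K$). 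Since the $U$-action on this $(n/r)$-element set is free, irreducibility of $L_r\alpha^{i}$ is equivalent to the cyclotomic image being all of $U$, i.e. to $K\cap\Q_p(\mu_n)=\Q_p(\mu_r)$.

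The main obstacle is therefore the group-theoretic step of proving this surjectivity under the stated hypotheses. The point is that those hypotheses ($p$ odd, or $p=2$ with $r\neq2$, or $p=2,r=2$ with $8\nmid n$) are exactly the cases in which $U=1+p^{j}\Z/p^{k}\Z$ is \emph{cyclic}, the sole obstruction being that $1+2\Z_2=\Z_2^{*}$ is not procyclic. For cyclic $U$ the subgroups are totally ordered, so the only subgroup not contained in the unique maximal subgroup $U'=1+pr\Z/n\Z$ (corresponding to $\Q_p(\mu_{pr})$) is $U$ itself; as $\mu_{pr}\not\subset K$ forces $\Gal\bigl(\Q_p(\mu_n)/(K\cap\Q_p(\mu_n))\bigr)\not\subset U'$, this group must equal $U$, giving $K\cap\Q_p(\mu_n)=\Q_p(\mu_r)$ and hence the full image. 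This is precisely where the excluded case intervenes: when $p=2$, $r=2$ and $8\mid n$ the group $U=(\Z/2^{k}\Z)^{*}$ is non-cyclic, the image can be a proper subgroup, $L_r\alpha^{i}$ splits further, and a sub-submodule without units appears, as exhibited in Example~\ref{z32}. Once the surjectivity is established, every minimal submodule contains a unit and Proposition~\ref{VCP} yields $\VC(L/K)$.
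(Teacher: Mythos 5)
Your proof is correct and takes essentially the same route as the paper: both split the minimal $K[G]$-submodules into those lying inside $L_p$ (handled by $\VC(L_p/K)$) and those lying in a primitive eigenspace of $\Gal(L/L_r)$, and both establish that these eigenspaces are irreducible over $K[G]$ using the identical group-theoretic core, namely that under the stated hypotheses $1+r\Z/n\Z$ is a cyclic $p$-group whose unique maximal subgroup $1+pr\Z/n\Z$ cannot contain the cyclotomic image since $\mu_{pr}\not\subset K$. Your only deviations are presentational: you produce an explicit unit $\alpha^{i}$ in each eigenspace via Theorem~\ref{thm1}, where the paper instead applies Proposition~\ref{VCP} to the eigenspaces viewed as $L_r[\Gal(L/L_r)]$-submodules, and you phrase irreducibility as transitivity of a free Galois action on characters rather than as irreducibility of $X^{n/r}-\zeta$ in $K[X]$.
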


\begin{proof}
\medskip
If $\sigma$ is a generator of $G=\Gal(L/K)$,
then the minimal non-zero $K[G]$-submodules of $L$ are the spaces
$V_f=\{x\in L\colon\; f(\sigma)\cdot x=0\}$, where $f$ ranges over the
monic irreducible factors of $X^n-1$ in $K[X]$.  Let
$\mu_n$ be the group of $n$-th roots of unity in some algebraic closure of $K$.
Every $z\in\mu_n$ either has order less than $n$, so that it is a zero of
$X^{n/p}-1$, or it has order $n$ and then $z^{n/r}$ is a root of
unity of order $r$ in $K$. Thus we see that
$$
X^n-1=(X^{n/p}-1) \; \prod_{
    \begin{array}{c}
    \zeta \in K^* \\
    \#\langle \zeta\rangle = r
    \end{array}
    }  (X^{n/r}-\zeta).
$$
We claim that the polynomials $X^{n/r}-\zeta$ are all irreducible in $K[X]$.
In order to see this, note first that $\Gal(K(\mu_n)/K)\subset \Aut(\mu_n)=(\Z/n\Z)^*$. Let $H$ be the kernel of the map $(\Z/n\Z)^*\to (\Z/r\Z)^*$. Then $\Gal(K(\mu_n)/K)$ is a subgroup of $H$, not contained in the kernel of the map $(\Z/n\Z)^*\to
(\Z/pr\Z)^*$.  Under the assumption we made, $H$ is the only such group, so
that $\Gal(K(\mu_n)/K)=H$ and $K(\mu_n)$ has degree $n/r=\#H$ over~$K$. This shows the claim.

Suppose now that $f$ is an irreducible factor of $X^n-1$. If $f$ is a factor of
$X^{n/p}-1$ then $V_f$ is a $K[G]$-submodule of $L_p$. Otherwise, $f=X^{n/r}-\zeta$
for some $\zeta\in K^*$ of order $r$, and $V_f$ is an eigenspace for the action
of $G_r=\langle \sigma^{n/r}\rangle=\Gal(L/L_r)$ on $L$, so it is an
$L_r[G_r]$-submodule of $L$.
Thus, every minimal $K[G]$-submodule of $L$ is a $K[G]$-submodule of $L_p$
or it is an $L_r[G_r]$-submodule of $L$. Our statement now follows with
Proposition \ref{VCP}.
\end{proof}

We can now prove our main theorem.

\begin{proof}[Proof of Theorem~\ref{thm2}]
Let us first assume that $\VC(L/K)$ holds, and suppose that we have intermediate fields
$K\subset E\subset F\subset L$ with $F/E$ cyclic of degree $r$.
Then $\VC(F/K)$ holds by Corollary \ref{subcor} and $\VC(F/E)$ holds by
Lemma \ref{topkummer}. With Theorem \ref{thm1}, we then see that $F/E$ is a
unit root Kummer extension.

To show the other implication, we assume that
$F/E$ is a unit root Kummer extension for all $E$, $F$ as above,
which by Theorem \ref{thm1} implies that $\VC(F/E)$ holds too.
We also assume that we are not in the case where $p=r=2$ and $8\mid m$.
We will prove that $\VC(L/K)$ holds, and by Lemma \ref{cyclic} it suffices to do this
under the additional hypothesis that $L/K$ is cyclic. So we assume that $L/K$
is cyclic of degree $n$.  We now consider two cases: $r=1$ and $r>1$.

If $r>1$ then we proceed with induction on $n/r$, where $n=[L:K]$.  If $n/r=1$,
then we may take $E/F$ to be $L/K$, and we are done.  If $n/r > 1$, then
consider $K\subset L \subset L_r\subset L_p\subset L$ as in Lemma \ref{hard2}.
By the induction hypothesis we then see that $\VC(L_p/K)$ holds. Taking $F/E$
to be $L/L_r$ we see that $\VC(L/L_r)$ holds.  Thus, Lemma \ref{hard2}
completes the proof in the case that $r>1$.

Now suppose that $r=1$. Put $\KK=K(\mu_p)$ and $\LL=L(\mu_p)$.
We claim that $\VC(\FF/\EE)$ holds whenever
$\KK\subset \EE \subset \FF \subset \LL$ and $\FF/\EE$ is a Kummer extension.
By Galois theory, $\EE$ and $\FF$ are of the form
$\EE=E(\mu_p)$ and $\FF=F(\mu_p)$ for
certain intermediate fields $K \subset E \subset F \subset L$.
We then have $\mu_p\not\subset E$, because $L/K$ has only a trivial tame part,
so by Lemma \ref{hard1} we see that
$\VC(\EE/\FF)$ holds, as claimed.
By using the already proven case $r>1$ of Theorem~\ref{thm2},
it follows that $\VC(\LL/\KK)$ holds. By Lemma \ref{tamebasechange} this implies that
$\VC(L/K)$ holds.
\end{proof}


\begin{thebibliography}{99}
\bibitem{BlessJohn86} \textsc{D. Blessenohl, K. Johnsen}, {\em Eine Versch\"{a}rfung des Satzes von der Normalbasis}, Journal of Algebra, {\bf 103} (1986), 141-159.
\bibitem{ByottElder07} \textsc{N. P. Byott, G.G. Elder}, {\em A valuation criterion for normal bases in elementary abelian extensions}, Bull. London Math. Soc., {\bf 39} (5) (2007), 705-708.
\bibitem{Eld09} \textsc{G.G. Elder}, {\em A valuation criterion for normal basis generators in local fields of characteristic p}, Arch. Math., {\bf 94} (2010), 43-47.
\bibitem{Faith57}  \textsc{C.C. Faith}, {\em Extensions of normal bases and completely basic fields}, Transactions of the American Mathematical Society, {\bf 85} (1957), 406 - 427.
\bibitem{Hachen97} \textsc{D. Hachenberger}, {\em Finite fields, Normal bases and completely free elements}, Kluwer Academic Publishers, 1997.
\bibitem{Lara08} \textsc{L.Thomas}, {\em A valuation criterion for normal basis generators in equal positive characteristics}, Journal of Algebra, preprint 2008.
\bibitem{SerCL} \textsc{J.-P. Serre}, {\em Corps locaux}, fourth edition, Hermann, Paris, 1968.
\end{thebibliography}
\end{document}